\newcommand{\N}{\mathbb N}
\newcommand{\R}{\mathbb R}
\providecommand{\norm}[1]{\left\lVert#1\right\rVert}
\providecommand{\bignorm}[1]{\bigl\lVert#1\bigr\rVert}
\providecommand{\ip}[2]{\left\langle #1, #2 \right\rangle}
\newcommand{\A}{\operatorname{A}}
\newcommand{\F}{\operatorname{F}}
\numberwithin{equation}{section}
\theoremstyle{plain}
\newtheorem{theorem}{Theorem}[section]
\newtheorem{lemma}[theorem]{Lemma}
\newtheorem{corollary}[theorem]{Corollary}
\theoremstyle{remark}
\newtheorem{remark}[theorem]{Remark}
\title[Mean convergence theorems]{%
Mean convergence theorems with respect to attractive points
in a Hilbert space}
\author{Koji~Aoyama}
\address[K.~Aoyama]
{Graduate School of Social Sciences, Chiba University,
1-33, Yayoi-cho, Inage-ku, Chiba-shi, Chiba 263-8522, Japan}
\email{aoyama@bm.skr.jp}
\author{Masashi~Toyoda}
\address[M.~Toyoda]
{Department of Information Science, Toho University, 
Miyama, Funabashi, Chiba 274-8510, Japan}
\email{mss-toyoda@is.sci.toho-u.ac.jp}
\keywords{Attractive point, mean convergence theorem, quasinonexpansive
extension}
\subjclass[2010]{47J25, 47J20, 47H09}
\begin{document}

\begin{abstract}
 In this paper, we show a mean convergence theorem for a mapping
 with an attractive point in a Hilbert space 
 by using a quasinonexpansive extension of the mapping
 and a mean convergence theorem for a quasinonexpansive mapping.
\end{abstract}

\maketitle

\section{Introduction}

The aim of this paper is to obtain a mean convergence theorem for a
mapping with an attractive point in the sense of Takahashi and
Takeuchi~\cite{MR2858695} in a Hilbert space.
The our main result is a generalization of a mean convergence result for
quasinonexpansive mappings \cite{MR2682871}*{Lemma 5.1}. 
However, we prove it by using \cite{MR2682871}*{Lemma 5.2} itself and a
quasinonexpansive extension of the given mapping
(Lemma~\ref{lem:extension}).

Takahashi and Takeuchi~\cite{MR2858695} introduced the notion of an
attractive point of a mapping defined on a Hilbert space, 
and they proved existence and mean convergence results for the point of
some nonlinear mapping \cite{MR2858695}*{Theorem 3.1}. 
Moreover, they pointed out that
we can obtain existence and convergence theorems for a fixed point 
of the mapping from the results for an attractive point. 
Thus, according to Takahashi and Takeuchi~\cite{MR2858695}, 
the study of attractive points can be considered to include 
that of fixed points. 
We know some similar results for attractive points; see, for example, 
\cites{MR2983907, 
MR3015118, 
MR3073499, 
MR3397117 
}. 

In this paper, we deal with the reverse.
That is to say, we derive a result of an attractive point from a known
result of a fixed point. 
An important tool for achieving this is a lemma with respect to the
existence of a quasinonexpansive extension (Lemma~\ref{lem:extension}). 

This paper is organized as follows: 
In \S2, we recall some definitions and lemmas related to our results.
Then, in \S3, 
we prove a mean convergence theorem for a mapping with an attractive
point in a Hilbert space (Theorem~\ref{thm:mean})
by using a mean convergence theorem for a quasinonexpansive mapping. 
The result is a generalization of \cite{MR2970683}*{Lemma 4.2}. 
Finally, in \S4, from our main result
we deduce a mean convergence theorem for a $\lambda$-hybrid mapping 
in the sense of \cite{MR2682871} with an attractive point.

\section{Preliminaries}

Throughout the present paper, 
$H$ denotes a real Hilbert space, 
$\ip{\,\cdot\,}{\,\cdot\,}$ the inner product of $H$, 
$\norm{\,\cdot\,}$ the norm of $H$, 
$C$ a nonempty subset of $H$, 
and $\N$ the set of positive integers. 
Strong convergence of a sequence $\{x_n\}$ in $H$ to $z\in H$ is denoted
by $x_n \to z$ and weak convergence by $x_n \rightharpoonup z$. 

Let $T\colon C\to H$ be a mapping. 
Then the set of fixed points of $T$ is denoted by $\F(T)$, that is, 
$\F(T) = \{z \in C\colon Tz=z\}$. 
A point $z \in H$ is said to be an \emph{attractive point} of
$T$ if $\norm{Tx - z} \leq \norm{x-z}$ for all $x \in
C$; see~\cite{MR2858695}. 
The set of attractive points of $T$ is denoted by $\A(T)$, that is, 
\[
\A(T) = \bigcap_{x\in C} \{ z \in H\colon \norm{Tx - z} \leq
\norm{x-z}\}. 
\]
It is clear from the definition that 
\begin{equation}\label{eqn:properties} 
C \cap \A(T) = \F(T) \cap \A(T). 
\end{equation}
Moreover, since 
$\norm{Tx - z} \leq \norm{x-z}$ is equivalent to 
$-2 \ip{Tx-x}{z} \leq \norm{x}^2 - \norm{Tx}^2$, 
it turns out that $\A(T)$ is closed and convex. 

Let $T\colon C\to H$ be a mapping and $F$ a nonempty subset of $H$. 
Then $T$ is said to be \emph{quasinonexpansive with respect to
$F$}~\cite{sqnIII} if $\norm{Tx - z} \leq \norm{x-z}$ for all $x\in C$
and $z\in F$; 
$T$ is said to be \emph{quasinonexpansive} if $\F(T) \ne \emptyset$
and $\norm{Tx - z} \leq \norm{x-z}$ for all $x\in C$ and $z\in \F(T)$; 
$T$ is said to be \emph{nonexpansive} if 
$\norm{Tx - Ty} \leq \norm{x-y}$ for all $x,y\in C$; 
$T$ is said to be \emph{$\lambda$-hybrid} if
\[ 
\norm{Tx-Ty}^2 \leq  \norm{x-y}^2 + 2 (1-\lambda) \ip{x-Tx}{y-Ty}
\]
or equivalently
\begin{align}
 \label{eqn:hybrid-equiv2}
 \begin{split}
  &\norm{Tx-Ty}^2 \\
  &\qquad
  \leq \norm{x-Ty}^2 + \norm{Ty-y}^2
  + 2 \ip{\lambda x + (1-\lambda)Tx - Ty}{Ty-y} 
 \end{split}
\end{align}
for all $x,y\in C$, where $\lambda \in \R$; 
see \cites{MR2682871, 
MR3017202, 
MR2981792 
} for more details. 
One can easily verify the following: 
\begin{itemize}
 \item if $\A(T) \ne \emptyset$, 
       then $T$ is quasinonexpansive with respect to~$\A(T)$;
 \item if $T$ is quasinonexpansive, then $C \cap \A(T) = \F(T)$; 
 \item if $T$ is a $\lambda$-hybrid mapping, then 
       $\F(T) \subset \A(T)$.
\end{itemize}
Moreover, under the assumption that $C$ is closed and convex, we know
that if $T$ is quasinonexpansive, then $\F(T)$ 
is closed and convex; see~\cite{MR0298499}*{Theorem~1}.

Let $D$ be a closed convex subset of $H$. 
It is known that, for each $x \in H$, 
there exists a unique point $x_0 \in D$ such that 
\[
 \norm{x - x_0} = \min\{\norm{x-y}: y\in D\}. 
\]
Such a point $x_0$ is denoted by $P_D (x)$ and $P_D$ is called the
\emph{metric projection} of $H$ onto $D$.
It is known that the metric projection is nonexpansive; 
see~\cite{MR2548424} for more details. 

We know the following: 

\begin{lemma}[\cite{MR2006529}*{Lemma 3.2}]\label{lem:TToyoda}
 Let $H$ be a Hilbert space,  $C$ a nonempty closed convex subset of
 $H$, and $\{x_n \}$ a sequence in $H$ such that 
 $\norm{x_{n+1} - y} \leq \norm{x_n -y}$ for all $y \in C$ and $n \in
 \N$. 
 Then $\{P_C (x_n)\}$ converges strongly to some $z\in C$. 
\end{lemma}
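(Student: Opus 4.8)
The plan is to prove that the sequence $u_n := P_C(x_n)$ is Cauchy; since $C$ is closed and $H$ is complete, it will then converge strongly to a point $z \in C$, which is exactly the conclusion. Write $d_n := \norm{x_n - u_n} = \min\{\norm{x_n - y} : y\in C\}$. Iterating the hypothesis gives $\norm{x_m - y} \leq \norm{x_n - y}$ whenever $m \geq n$ and $y \in C$. Taking $y = u_n$ and using that $u_{n+1}$ realizes the distance from $x_{n+1}$ to $C$ yields $d_{n+1} \leq \norm{x_{n+1} - u_n} \leq d_n$, so $\{d_n\}$ is nonincreasing and bounded below by $0$, hence convergent; in particular $\{d_n^2\}$ converges.

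Next I would estimate $\norm{u_m - u_n}$ for $m \geq n$ by applying the parallelogram law to $a = x_m - u_n$ and $b = x_m - u_m$, which gives
\[
 \norm{u_m - u_n}^2 = 2\norm{x_m - u_n}^2 + 2\norm{x_m - u_m}^2 - 4\bignorm{x_m - \tfrac12(u_n + u_m)}^2 .
\]
Since $C$ is convex, $\tfrac12(u_n + u_m) \in C$, so $\bignorm{x_m - \tfrac12(u_n + u_m)} \geq d_m = \norm{x_m - u_m}$, and the identity collapses to $\norm{u_m - u_n}^2 \leq 2\norm{x_m - u_n}^2 - 2 d_m^2$. Because $m \geq n$ and $u_n \in C$, the first paragraph's monotonicity bound gives $\norm{x_m - u_n} \leq \norm{x_n - u_n} = d_n$, whence $\norm{u_m - u_n}^2 \leq 2 d_n^2 - 2 d_m^2$.

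As $\{d_n^2\}$ converges, the right-hand side tends to $0$ as $n \to \infty$, uniformly in $m \geq n$; hence $\{u_n\}$ is Cauchy, and its limit $z$ lies in $C$ since $C$ is closed. This completes the proof.

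The only real subtlety, and the step I would settle first, is the choice of which parallelogram-law decomposition to use: one wants an identity that simultaneously exploits $\norm{x_m - u_m} = d_m$, the monotonicity bound $\norm{x_m - u_n} \leq d_n$, and the convexity of $C$ through the midpoint $\tfrac12(u_n + u_m)$. Once that combination is in place, the remainder is routine manipulation of monotone scalar sequences, and no appeal to weak compactness or to the variational characterization of $P_C$ is required.
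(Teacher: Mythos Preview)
Your argument is correct. The parallelogram identity with $a=x_m-u_n$ and $b=x_m-u_m$ indeed gives
\[
\norm{u_m-u_n}^2 = 2\norm{x_m-u_n}^2 + 2\norm{x_m-u_m}^2 - 4\bignorm{x_m-\tfrac12(u_n+u_m)}^2,
\]
and the two estimates $\bignorm{x_m-\tfrac12(u_n+u_m)}\geq d_m$ (convexity of $C$) and $\norm{x_m-u_n}\leq d_n$ (iterated hypothesis) combine to yield $\norm{u_m-u_n}^2\leq 2(d_n^2-d_m^2)$, which forces $\{u_n\}$ to be Cauchy. Nothing is missing.

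As to the comparison: the present paper does not give its own proof of this lemma; it is quoted verbatim from \cite{MR2006529}*{Lemma~3.2} and used as a black box. So there is no in-paper argument to weigh your proof against. For what it is worth, the proof in the original Takahashi--Toyoda reference proceeds via the variational characterization $\ip{x_n-u_n}{u_n-y}\geq 0$ for $y\in C$, which (applied with $y=u_m$) gives $\norm{u_n-u_m}^2\leq\norm{x_n-u_m}^2-\norm{x_n-u_n}^2$ directly; your parallelogram route reaches the same inequality (up to a harmless factor of~$2$) while avoiding that inner-product inequality, at the cost of one extra line. Either approach is entirely standard for this lemma.
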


\begin{lemma}[\cite{MR2682871}*{Lemma~5.1}]\label{lem:quasi}
 Let $H$ be a Hilbert space, $C$ a nonempty closed convex subset of $H$, 
 $T\colon C\to C$ a quasinonexpansive mapping, 
 $x\in C$, and $\{z_n\}$ a sequence in $C$ defined by
 $z_n = \frac{1}{n} \sum_{k=1}^n T^{k-1} x$
 for $n\in\N$, where $T^0$ is the identity mapping on $H$. 
 Then the sequence $\{P_{\F(T)} T^n x\}$ converges strongly, and
 moreover, if every weak cluster point of $\{z_n \}$ belongs to $\F(T)$, 
 then $\{z_n\}$ converges weakly to the strong limit of 
 $\{P_{\F(T)} T^n x\}$. 
\end{lemma}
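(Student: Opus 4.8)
The plan is to deduce everything from the Fej\'er-type monotonicity of the orbit $\{T^n x\}$ together with Lemma~\ref{lem:TToyoda}, and then to control the averages $\{z_n\}$ by a polarization identity. First I would record the routine facts. Since $T$ is quasinonexpansive and $C$ is closed and convex, $\F(T)$ is nonempty, closed, and convex (as recalled in \S2). For each $p\in\F(T)$ the sequence $\{\norm{T^n x-p}\}$ is nonincreasing, hence $\{\norm{T^n x-p}^2\}$ converges; call its limit $\ell(p)$. In particular $\{T^n x\}$ is bounded, and from $\norm{z_n-p}\le\frac1n\sum_{k=1}^n\norm{T^{k-1}x-p}\le\norm{x-p}$ the sequence $\{z_n\}$ is bounded as well. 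Applying Lemma~\ref{lem:TToyoda} to $\{T^n x\}$, which satisfies $\norm{T^{n+1}x-y}\le\norm{T^n x-y}$ for every $y\in\F(T)$, with $C$ there replaced by $\F(T)$, yields a point $z\in\F(T)$ with $P_{\F(T)}T^n x\to z$; this is the first assertion.

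Next I would establish the behaviour of the averages against directions in $\F(T)$. Using the elementary identity
\[
 \ip{v}{p-q}=\frac12\bigl(\norm{v-q}^2-\norm{v-p}^2+\norm{p}^2-\norm{q}^2\bigr)\qquad(v,p,q\in H)
\]
with $v=T^{k-1}x$ and $p,q\in\F(T)$, averaging over $k=1,\dots,n$, and invoking the Ces\`aro convergence of the averages of the convergent sequences $\{\norm{T^{k-1}x-p}^2\}$ and $\{\norm{T^{k-1}x-q}^2\}$, I obtain
\[
 \ip{z_n}{p-q}\longrightarrow\frac12\bigl(\ell(q)-\ell(p)+\norm{p}^2-\norm{q}^2\bigr)\qquad(p,q\in\F(T)).
\]
In particular $\ip{z_n-z}{p-z}$ converges for every $p\in\F(T)$, and a short computation (expanding $\ip{z}{p-z}$) shows that its limit equals $\frac12\bigl(\ell(z)-\ell(p)+\norm{p-z}^2\bigr)$.

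Finally, assuming that every weak cluster point of $\{z_n\}$ lies in $\F(T)$, I would conclude by showing that every such cluster point coincides with $z$; boundedness of $\{z_n\}$ then forces $z_n\rightharpoonup z$. The extra ingredient is a standard property of the metric projection: for every $p\in\F(T)$,
\[
 \norm{T^n x-P_{\F(T)}T^n x}^2+\norm{P_{\F(T)}T^n x-p}^2\le\norm{T^n x-p}^2 .
\]
Since $P_{\F(T)}T^n x\to z$ and $z\in\F(T)$, sandwiching $\norm{T^n x-z}$ between $\norm{T^n x-P_{\F(T)}T^n x}$ and $\norm{T^n x-P_{\F(T)}T^n x}+\norm{P_{\F(T)}T^n x-z}$ shows $\norm{T^n x-P_{\F(T)}T^n x}^2\to\ell(z)$; letting $n\to\infty$ in the displayed inequality gives $\ell(z)+\norm{z-p}^2\le\ell(p)$ for all $p\in\F(T)$, so the limit of $\ip{z_n-z}{p-z}$ is $\le0$. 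Hence, if $z_{n_i}\rightharpoonup w$, then $w\in\F(T)$ by hypothesis and $\ip{w-z}{p-z}\le0$ for all $p\in\F(T)$; taking $p=w$ yields $\norm{w-z}^2\le0$, i.e. $w=z$.

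I expect the main obstacle to be exactly this last step: identifying the weak limit of the Ces\`aro averages $\{z_n\}$ with the a priori unrelated strong limit $z$ of $\{P_{\F(T)}T^n x\}$. Everything else is bookkeeping, but arranging the two-parameter identity for $\ip{z_n}{p-q}$ to combine with the projection inequality and with the convergence $\norm{T^n x-P_{\F(T)}T^n x}^2\to\ell(z)$ is where the argument has to be set up with some care.
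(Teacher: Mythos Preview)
Your argument is correct. The Fej\'er monotonicity of $\{T^n x\}$ with respect to $\F(T)$ plus Lemma~\ref{lem:TToyoda} gives the strong convergence of $\{P_{\F(T)}T^n x\}$; the polarization identity yields the existence of $\lim_n \ip{z_n-z}{p-z}$ for every $p\in\F(T)$; and the projection inequality combined with $\norm{T^n x-P_{\F(T)}T^n x}^2\to\ell(z)$ pins that limit down as nonpositive, which forces every weak cluster point of $\{z_n\}$ in $\F(T)$ to equal $z$. Each of these steps checks out.

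As for comparison with the paper: note that the paper does \emph{not} supply an independent proof of Lemma~\ref{lem:quasi}. It is quoted from \cite{MR2682871}*{Lemma~5.1} and used as a black box in the proof of Theorem~\ref{thm:mean}. The remark that Theorem~\ref{thm:mean} in turn implies Lemma~\ref{lem:quasi} (via Lemma~\ref{lem:P_FT=P_AT}) is a statement about logical strength, not an alternative proof, since Theorem~\ref{thm:mean} itself was established by invoking Lemma~\ref{lem:quasi}. So your direct argument is genuinely new relative to what appears in this paper: you are giving a self-contained proof where the paper simply cites the result. The approach you take---Fej\'er monotonicity, Lemma~\ref{lem:TToyoda}, and a variational characterization of the limit of the projections---is in the spirit of how such mean ergodic results are typically proved, and it stays entirely within the tools already assembled in \S2.
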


We also know the following: 

\begin{lemma}[\cite{MR2858695}*{Lemma 2.2}] \label{lem:TT}
 Let $C$ be a nonempty closed convex subset of a Hilbert space $H$, 
 $T\colon C\to C$ a mapping, and $z$ an attractive point of $T$. 
 Then $P_C (z) \in \F(T)$. 
\end{lemma}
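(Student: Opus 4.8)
The plan is to exploit the minimality property defining the metric projection together with the attractiveness hypothesis on $z$. Set $w = P_C(z)$. Since $w \in C$ and $T$ maps $C$ into $C$, the point $Tw$ also lies in $C$. Because $z \in \A(T)$, applying the defining inequality of an attractive point with $x = w$ yields $\norm{Tw - z} \le \norm{w - z}$.

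On the other hand, $w = P_C(z)$ is by definition the (unique) point of $C$ attaining $\min\{\norm{y - z} : y \in C\}$, and $Tw \in C$, so $\norm{w - z} \le \norm{Tw - z}$. Combining the two inequalities gives $\norm{Tw - z} = \norm{w - z} = \min\{\norm{y - z} : y \in C\}$, so $Tw$ is also a minimizer; by the uniqueness of the nearest point recalled in \S2, $Tw = w$, that is, $w = P_C(z) \in \F(T)$. Alternatively, one could replace the appeal to uniqueness by the standard variational characterization $\ip{z - w}{y - w} \le 0$ for all $y \in C$: taking $y = Tw$ and expanding $\norm{Tw - z}^2 = \norm{Tw - w}^2 + 2\ip{Tw - w}{w - z} + \norm{w - z}^2 \ge \norm{Tw - w}^2 + \norm{w - z}^2$, then comparing with $\norm{Tw - z} \le \norm{w - z}$, which forces $\norm{Tw - w} = 0$.

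There is no genuine obstacle in this argument; the only point to keep in mind is that the hypothesis $T\colon C \to C$ (rather than merely $T\colon C\to H$) is essential, as it is precisely what guarantees $Tw \in C$ and hence allows the minimality of $P_C(z)$ to be applied to the point $Tw$.
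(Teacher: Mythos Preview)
Your argument is correct. Note, however, that the paper does not supply its own proof of this lemma: it is quoted from \cite{MR2858695}*{Lemma~2.2} and stated without proof, so there is nothing in the present paper to compare against. Your approach---apply the attractive-point inequality at $w=P_C(z)$, then use either the uniqueness of the metric projection or its variational characterization to force $Tw=w$---is exactly the standard proof of this fact, and your remark that the hypothesis $T\colon C\to C$ (not merely $T\colon C\to H$) is essential is to the point.
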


Using Lemma~\ref{lem:TT}, we obtain the following: 

\begin{lemma} \label{lem:P_FT=P_AT}
 Let $C$ be a nonempty closed convex subset of a Hilbert space $H$
 and $T\colon C\to C$ a quasinonexpansive mapping. 
 Then $P_{\F(T)} (x) = P_{\A(T)} (x)$ for all $x \in C$. 
\end{lemma}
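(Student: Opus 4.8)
The plan is to prove the slightly stronger statement that $P_{\A(T)}(x)$ itself lies in $\F(T)$; once this is known, equality of the two projections is immediate because $\F(T)\subseteq\A(T)$. First I would check that everything is well posed: since $T$ is quasinonexpansive, $\F(T)\neq\emptyset$, and both $\F(T)$ and $\A(T)$ are nonempty, closed, and convex by the facts recalled in \S2; hence $P_{\F(T)}$ and $P_{\A(T)}$ are single-valued, and from $\F(T)\subseteq\A(T)$ we get
\[
\norm{x-P_{\A(T)}(x)}\le\norm{x-P_{\F(T)}(x)}
\]
for every $x\in H$, since projecting onto a larger set cannot increase the distance.

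The heart of the argument is the reverse estimate for $x\in C$. Set $w=P_{\A(T)}(x)$. Then $w\in\A(T)$, i.e.\ $w$ is an attractive point of $T$, so Lemma~\ref{lem:TT} yields $P_C(w)\in\F(T)$. Because $P_C$ is nonexpansive and $P_C(x)=x$ (as $x\in C$),
\[
\norm{x-P_C(w)}=\norm{P_C(x)-P_C(w)}\le\norm{x-w}.
\]
Since $P_C(w)\in\F(T)\subseteq\A(T)$, the minimality defining $w=P_{\A(T)}(x)$ gives $\norm{x-w}\le\norm{x-P_C(w)}$; hence these two distances are equal, and uniqueness of the nearest point of $\A(T)$ to $x$ forces $P_C(w)=w$. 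In particular $w\in\F(T)$.

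To conclude, note that $w\in\F(T)$ and $w$ minimizes the distance from $x$ over $\A(T)$, hence a fortiori over the subset $\F(T)$; by uniqueness of $P_{\F(T)}(x)$ this gives $P_{\F(T)}(x)=w=P_{\A(T)}(x)$. The one subtle point is that $P_{\A(T)}(x)$ need not obviously be an element of $C$, so one cannot apply Lemma~\ref{lem:TT} to it directly as a fixed-point statement; the detour above — projecting onto $C$, using nonexpansiveness of $P_C$, and then invoking the minimality of the projection onto $\A(T)$ — is exactly what repairs this and shows that $P_{\A(T)}(x)$ is in fact fixed by $T$.
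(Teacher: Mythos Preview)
Your proof is correct and follows essentially the same route as the paper's: both apply Lemma~\ref{lem:TT} to $w=P_{\A(T)}(x)$ to get $P_C(w)\in\F(T)$, then use $x=P_C(x)$ together with nonexpansiveness of $P_C$ to compare $\norm{x-P_C(w)}$ with $\norm{x-w}$, and close up using $\F(T)\subset\A(T)$. The only cosmetic difference is that you push the argument one step further to conclude $P_C(w)=w$ (hence $w\in\F(T)$), whereas the paper stops at $\norm{x-P_{\F(T)}(x)}\le\norm{x-P_{\A(T)}(x)}$ and then invokes $P_{\F(T)}(x)\in\A(T)$ with uniqueness of $P_{\A(T)}(x)$; these are equivalent ways of drawing the conclusion from the same chain of inequalities.
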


\begin{proof}
 Let $x \in C$ be fixed. 
 Since $P_{\A(T)} (x) \in \A(T)$, Lemma~\ref{lem:TT} implies that
 $P_C P_{\A(T)} (x) \in \F(T)$. Hence we have 
 \[
 \norm{P_{\F(T)} (x) -x} \leq \norm{P_C P_{\A(T)} (x) - x}. 
 \]
 Since $x =P_C (x)$ and $P_C$ is nonexpansive, it follows that 
 \[
 \norm{P_C P_{\A(T)} (x) - x} = \norm{P_C P_{\A(T)} (x) - P_C(x)}
 \leq \norm{P_{\A(T)} (x) -x}. 
 \]
 As a result, we have 
 $\norm{P_{\F(T)} (x) -x} \leq \norm{P_{\A(T)} (x) -x}$.
 Taking into account $C \cap \A(T) = \F(T)$, 
 we have $P_{\F(T)}(x) \in \A(T)$. 
 Therefore we conclude that $P_{\F(T)} (x) = P_{\A(T)} (x)$. 
\end{proof}

\section{Mean convergence theorem}
In this section, using a mean convergence result for a quasinonexpansive
mapping (Lemma~\ref{lem:quasi}), 
we obtain a mean convergence theorem with respect to attractive points
(Theorem~\ref{thm:mean}). 

We first prove that a mapping with an attractive point has a
quasinonexpansive extension such that the set of fixed points is equal
to that of attractive points. 

\begin{lemma}\label{lem:extension}
 Let $H$ be a Hilbert space, $C$ a nonempty subset of $H$,
 $T\colon C \to H$ a mapping with an attractive point, 
 and $\tilde{T} \colon H \to H$ a mapping defined by 
 \begin{equation}\label{e:extension}
 \tilde{T}x = 
 \begin{cases}
  Tx, & x\in [C \setminus \F(T)] \cup [\F(T)\cap \A(T)];\\
  P_{\A(T)}(x), & \text{otherwise.} 
 \end{cases}
\end{equation}
 Then $\A(T) = \F(\tilde{T})$
 and $\tilde{T}$ is quasinonexpansive (with respect to $\A(T)$). 
\end{lemma}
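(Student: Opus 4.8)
The plan is to exploit the fact that the domain $H$ is split into two pieces by the definition \eqref{e:extension}. Write $S = [C\setminus\F(T)]\cup[\F(T)\cap\A(T)]$ for the set on which $\tilde{T}$ agrees with $T$; since $\F(T)\subseteq C$, the complementary set, on which $\tilde{T}=P_{\A(T)}$, is exactly $H\setminus S = (H\setminus C)\cup(\F(T)\setminus\A(T))$. Note first that $\A(T)$ is nonempty by hypothesis and closed and convex by the remark in \S2, so the metric projection $P_{\A(T)}$ is a well-defined nonexpansive self-map of $H$ fixing every point of $\A(T)$. I would then establish three things in turn: the inequality $\norm{\tilde{T}x - z}\le\norm{x-z}$ for all $x\in H$ and $z\in\A(T)$; the inclusion $\A(T)\subseteq\F(\tilde{T})$; and the reverse inclusion $\F(\tilde{T})\subseteq\A(T)$.

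For the inequality, fix $z\in\A(T)$ and split on whether $x\in S$. If $x\in S$, then $\tilde{T}x = Tx$ and $x\in C$; since $\A(T)\ne\emptyset$, $T$ is quasinonexpansive with respect to $\A(T)$ (one of the facts recorded in \S2), so $\norm{Tx-z}\le\norm{x-z}$. If $x\notin S$, then $\tilde{T}x = P_{\A(T)}(x)$, and $\norm{P_{\A(T)}(x)-z} = \norm{P_{\A(T)}(x)-P_{\A(T)}(z)}\le\norm{x-z}$ because $P_{\A(T)}$ is nonexpansive and $P_{\A(T)}(z)=z$. For $\A(T)\subseteq\F(\tilde{T})$, take $z\in\A(T)$: if $z\in C$ then $z\in C\cap\A(T)=\F(T)\cap\A(T)\subseteq S$ by \eqref{eqn:properties}, whence $\tilde{T}z = Tz = z$; if $z\notin C$ then $z\in H\setminus S$, whence $\tilde{T}z = P_{\A(T)}(z)=z$. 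In particular $\emptyset\ne\A(T)\subseteq\F(\tilde{T})$, so the inequality just proved shows $\tilde{T}$ is quasinonexpansive (and quasinonexpansive with respect to $\A(T)$).

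For the remaining inclusion $\F(\tilde{T})\subseteq\A(T)$, suppose $\tilde{T}x = x$. If $x\in S$, then $Tx=\tilde{T}x = x$, so $x\in\F(T)$; since $x\in S = [C\setminus\F(T)]\cup[\F(T)\cap\A(T)]$ and $x\in\F(T)$, necessarily $x\in\F(T)\cap\A(T)$, hence $x\in\A(T)$. If $x\notin S$, then $x=\tilde{T}x = P_{\A(T)}(x)\in\A(T)$. Combining the two inclusions gives $\F(\tilde{T})=\A(T)$, which completes the proof. I do not expect any genuine obstacle here: every step is an immediate appeal to a fact quoted in \S2, and the only thing requiring care is keeping the case split induced by the partition $\{S,\,H\setminus S\}$ straight — in particular remembering that a point of $S$ lying in $\F(T)$ must lie in $\F(T)\cap\A(T)$.
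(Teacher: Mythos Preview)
Your proof is correct and follows essentially the same approach as the paper's: both introduce the set $S$ (the paper calls it $D$), carry out the same case split on membership in $S$, and appeal to the same ingredients---nonexpansiveness of $P_{\A(T)}$, the identity $C\cap\A(T)=\F(T)\cap\A(T)$ from~\eqref{eqn:properties}, and the definition of $\A(T)$. The only cosmetic differences are the order in which you verify the three claims and that, for $\A(T)\subseteq\F(\tilde{T})$, you split on $z\in C$ versus $z\notin C$ rather than on $z\in S$ versus $z\notin S$; these are equivalent since $S\subseteq C$.
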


\begin{proof}
 Set $D =  [C \setminus \F(T)] \cup [\F(T)\cap \A(T)]$. 
 We first show $\A(T) \subset \F(\tilde{T})$. 
 Let $z \in \A(T)$. 
 Suppose that $z \in D$. 
 Taking into account~\eqref{eqn:properties}, 
 we know that $z \in \F(T)$. Thus we have $\tilde{T}z = Tz = z$, 
 and hence $z \in \F(\tilde{T})$. 
 On the other hand, suppose that 
 $z \notin D$. 
 Since $\F(P_{\A(T)}) = \A(T)$, 
 it follows that $\tilde{T}z = P_{\A(T)}(z) = z$, and hence
 $z \in \F(\tilde{T})$. Therefore, $\A(T) \subset \F(\tilde{T})$.

 We next show $\A(T) \supset \F(\tilde{T})$. 
 Let $z \in \F(\tilde{T})$. 
 Suppose that $z \in D$. 
 Then $z= \tilde{T} z = Tz$. Thus $z\in \F(T)$, 
 and hence $z \in \F(T)\cap \A(T) \subset \A(T)$. 
 On the other hand, suppose that 
 $z \notin D$. 
 Since $\F(P_{\A(T)}) = \A(T)$, 
 it follows that $z = \tilde{T}z = P_{\A(T)}(z)$, and hence
 $z \in \A(T)$. Therefore, $\A(T) \supset \F(\tilde{T})$.

 We finally show that $\tilde{T}$ is quasinonexpansive with respect to
 $\A(T)$. 
 Let $x \in H$ and $z \in \A(T)$. 
 Suppose that $x \in D$. 
 Since $x \in C$ and $z \in \A(T)$, we have 
 $\bignorm{\tilde{T} x - z} = \norm{Tx -z} \leq \norm{x-z}$. 
 On the other hand, suppose that 
 $x \notin D$. 
 Since $\F(P_{\A(T)}) = \A(T)$ and $P_{\A(T)}$ is nonexpansive, we have
 \[
 \bignorm{\tilde{T} x - z}
 = \norm{P_{\A(T)} (x) - P_{\A(T)} (z)} \leq \norm{x -z}. 
 \]
 Therefore, $\tilde{T}$ is quasinonexpansive with respect to $\A(T)$. 
\end{proof}

Using Lemmas~\ref{lem:TToyoda}, \ref{lem:quasi},
and~\ref{lem:extension}, we obtain the following theorem: 

\begin{theorem}\label{thm:mean}
 Let $H$ be a Hilbert space, $C$ a nonempty subset of $H$, 
 $T\colon C\to C$ a mapping with an attractive point, 
 $x\in C$, and $\{z_n\}$ a sequence in $C$ defined by
 \begin{equation}\label{e:z_n-def2}
  z_n = \frac{1}{n} \sum_{k=1}^n T^{k-1} x      
 \end{equation}
 for $n\in\N$, where $T^0$ is the identity mapping on $C$. 
 Then the sequence $\{P_{\A(T)} T^n x\}$ converges strongly, and
 moreover, if every weak cluster point of $\{z_n\}$ belongs to $\A(T)$, 
 then $\{z_n\}$ converges weakly to the strong limit of 
 $\{P_{\A(T)} T^n x\}$. 
\end{theorem}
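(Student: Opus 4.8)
The plan is to transfer the conclusion from the quasinonexpansive setting (Lemma~\ref{lem:quasi}) to the attractive-point setting via the extension $\tilde T$ supplied by Lemma~\ref{lem:extension}. First I would observe that since $T\colon C\to C$ has an attractive point, Lemma~\ref{lem:extension} yields a quasinonexpansive mapping $\tilde T\colon H\to H$ with $\F(\tilde T)=\A(T)$. The crucial preliminary point is that the iterates agree: because $x\in C$ and $T$ maps $C$ into $C$, I claim $\tilde T^{\,k}x=T^k x$ for every $k\ge 0$. Indeed, if $T^{k}x\in\F(T)$ then $T^{k+1}x=T^{k}x\in C\cap\F(T)$; combining this with $T^{k}x\in\A(T)$ (should that hold) keeps us inside the set $D=[C\setminus\F(T)]\cup[\F(T)\cap\A(T)]$, and if $T^k x\notin\F(T)$ then $T^k x\in C\setminus\F(T)\subset D$ as well—so in every case $\tilde T$ acts as $T$ along the orbit of $x$. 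This identification makes the averaged sequence $z_n=\frac1n\sum_{k=1}^n T^{k-1}x$ coincide with $\frac1n\sum_{k=1}^n\tilde T^{\,k-1}x$.

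Next, to invoke Lemma~\ref{lem:quasi} I need the domain hypothesis "$C$ nonempty closed convex" — but applied to $\tilde T$ the relevant domain is all of $H$, which is trivially nonempty closed convex, and $\tilde T\colon H\to H$ is quasinonexpansive with $\F(\tilde T)=\A(T)\ne\emptyset$. Hence Lemma~\ref{lem:quasi} (with $H$ in place of $C$ and $\tilde T$ in place of $T$) gives that $\{P_{\F(\tilde T)}\tilde T^{\,n}x\}$ converges strongly, i.e.\ $\{P_{\A(T)}T^n x\}$ converges strongly, which is the first assertion.

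For the second assertion, suppose every weak cluster point of $\{z_n\}$ lies in $\A(T)=\F(\tilde T)$. Then the hypothesis of the second half of Lemma~\ref{lem:quasi} is met for $\tilde T$, so $\{z_n\}$ converges weakly to the strong limit of $\{P_{\F(\tilde T)}\tilde T^{\,n}x\}=\{P_{\A(T)}T^n x\}$, as desired. I would close by noting that the only real content beyond bookkeeping is the orbit-agreement claim $\tilde T^{\,k}x=T^k x$; the main obstacle is making sure the case analysis there is airtight, in particular handling the point where $T^k x$ might lie in $\F(T)\setminus\A(T)$ versus $\F(T)\cap\A(T)$ — but since $x$ and all its $T$-iterates stay in $C$, the definition of $\tilde T$ on $C$ reduces to $T$ except possibly on $\F(T)\setminus\A(T)$, and a point there is mapped by $T$ to itself, which lies in $C\cap\F(T)$, so iterating never leaves the region where $\tilde T=T$; this keeps everything consistent. (One could alternatively use Lemmas~\ref{lem:TToyoda} and~\ref{lem:P_FT=P_AT} directly, but routing through Lemma~\ref{lem:quasi} as above is cleanest.)
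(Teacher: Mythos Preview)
Your overall strategy---reduce to Lemma~\ref{lem:quasi} via the extension $\tilde T$ from Lemma~\ref{lem:extension}---matches the paper's, but the orbit-agreement claim $\tilde T^{\,k}x=T^kx$ has a genuine gap, and your closing justification is backwards. On the set $\F(T)\setminus\A(T)$ the extension $\tilde T$ acts as $P_{\A(T)}$, \emph{not} as $T$: for $y\in\F(T)\setminus\A(T)$ one has $Ty=y$ but $\tilde Ty=P_{\A(T)}(y)\ne y$. So such a point lies precisely \emph{outside} the region $D$ where $\tilde T$ agrees with $T$, and it stays outside under iteration by $T$ since $T$ fixes it. Your sentence ``a point there is mapped by $T$ to itself\ldots so iterating never leaves the region where $\tilde T=T$'' is therefore false. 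Concretely, if $T^{k_0}x\in\F(T)\setminus\A(T)$ for some $k_0\ge 0$, the $T$-orbit freezes at $T^{k_0}x$ while the $\tilde T$-orbit jumps to $P_{\A(T)}(T^{k_0}x)$, and thereafter $z_n\ne\tilde z_n$.

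The paper treats the two assertions separately. For the strong convergence of $\{P_{\A(T)}T^nx\}$ it does not use $\tilde T$ at all: since any $u\in\A(T)$ satisfies $\norm{T^{n+1}x-u}\le\norm{T^nx-u}$, Lemma~\ref{lem:TToyoda} applied with the closed convex set $\A(T)$ gives the result directly---this is the route you relegated to a parenthetical, and it sidesteps the orbit-agreement issue entirely. For the ``moreover'' clause the paper splits on whether $x\in D$ or $x\in\F(T)\setminus\A(T)$; in the latter case $z_n\equiv x\notin\A(T)$, contradicting the hypothesis on weak cluster points, so that case is vacuous. The same reasoning shows that under the hypothesis no iterate $T^kx$ can lie in $\F(T)\setminus\A(T)$ (otherwise the orbit freezes there and $z_n\to T^kx\notin\A(T)$), which is exactly what is needed to keep $T^kx\in D$ for all $k$ and hence obtain $\tilde T^{\,k}x=T^kx$ by induction. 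Your proof needs this observation to close the gap; as written, the case analysis you flag as ``the main obstacle'' is not resolved.
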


\begin{proof} 
 Let $u \in \A(T)$. Then we see that 
 \[
 \norm{T^{n+1} x - u} \leq \norm{T^n x - u}
 \]
 for all $n \in \N$. 
 Thus Lemma~\ref{lem:TToyoda} shows that 
 $\{P_{\A(T)} T^n x\}$ converges strongly. 

 We next show the moreover part. 
 We assume that every weak cluster point of $\{z_n\}$ belongs to $\A(T)$
 and set $D= [C \setminus \F(T)] \cup [\F(T)\cap \A(T)]$. 
 We consider two cases: 
 (i) Suppose that $x \in D$.
 Let $\tilde{T}\colon H \to H$ be a mapping defined
 by~\eqref{e:extension}
 and $\{\tilde{z}_n\}$ a sequence in $H$ defined by 
 \[
 \tilde{z}_n = \frac{1}{n} \sum_{k=1}^n \tilde{T}^{k-1} x  
 \]
 for $n\in\N$, where $\tilde{T}^0$ is the identity mapping on $H$. 
 Then Lemma~\ref{lem:extension} implies that $\A(T) = \F(\tilde{T})$ and
 $\tilde{T}$ is quasinonexpansive with respect to $\A(T)$. 
 Thus we deduce from Lemma~\ref{lem:quasi} that 
 $\{ \tilde{z}_n \}$ converges weakly to the strong limit of 
 $\{P_{\F(\tilde{T})} \tilde{T}^n x\}$.
 It is clear from the definition of $\tilde{T}$ that 
 $\tilde{T}x = Tx$. Hence $\tilde{z}_n = z_n$ and 
 $P_{\F(\tilde{T})} \tilde{T}^n x = P_{\A(T)} T^n x$ for all $n \in \N$. 
 Therefore we get the conclusion. 
 (ii) Suppose that $x \notin D$.
 Taking into account 
 \[
  C \setminus \left\{ [C \setminus \F(T)] \cup [\F(T)\cap \A(T)]
 \right\} = \F(T) \setminus \A(T),
 \]
 we know that $x \in \F(T) \setminus \A(T)$. 
 Since $x$ is a fixed point of $T$, it follows that 
 \[
 z_n = \frac{1}{n} \sum_{k=1}^n T^{k-1} x 
 = \frac{1}{n} \sum_{k=1}^n x  = x
 \]
 for all $n \in \N$. 
 Hence $\lim_n z_n = x \notin \A(T)$,
 which contradicts the assumption that 
 every weak cluster point of $\{z_n\}$ belongs to $\A(T)$. 
\end{proof}

\begin{remark}
 Taking into account Lemma~\ref{lem:P_FT=P_AT}, we know that
 Theorem~\ref{thm:mean} implies Lemma~\ref{lem:quasi}. 
\end{remark}

\begin{remark}
 Using Theorem~\ref{thm:mean} and~\cite{MR2970683}*{Theorem 3.3}, 
 we obtain \cite{MR2970683}*{Lemma 4.2}. 
\end{remark}

\section{Application}

In this section, using Theorem~\ref{thm:mean}, 
we show a mean convergence theorem for a 
$\lambda$-hybrid mapping with an attractive point.

From the proof of \cite{MR2682871}*{Theorem 4.1}, we know the following
fact. For the sake of completeness, we give the proof. 

\begin{lemma}\label{lem:fpt}
 Let $H$ be a Hilbert space, $C$ a nonempty subset of $H$, 
 $\lambda \in \R$, $T\colon C\to C$ a $\lambda$-hybrid mapping, 
 $x\in C$, and $\{z_n\}$ a sequence in $C$ defined by 
 \eqref{e:z_n-def2}
 for $n\in\N$, where $T^0$ is the identity mapping on $C$. 
 Suppose that $\{T^n x\}$ is bounded. 
 Then every weak cluster point of $\{z_n\}$ is an attractive point of $T$. 
\end{lemma}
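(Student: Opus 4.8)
The plan is to fix an arbitrary weak cluster point $w$ of $\{z_n\}$, say $z_{n_i}\rightharpoonup w$, and an arbitrary $v\in C$, and to establish the single inequality $\norm{Tv-w}\le\norm{v-w}$; since $v$ is arbitrary this says exactly that $w\in\A(T)$. Write $u_k=T^{k-1}x$, so $u_1=x$, $u_{k+1}=Tu_k$, and $\{u_{k+1}\}_{k\ge1}=\{T^nx\}$ is bounded by hypothesis. The starting point is the $\lambda$-hybrid inequality $\norm{Ta-Tb}^2\le\norm{a-b}^2+2(1-\lambda)\ip{a-Ta}{b-Tb}$ applied with $a=v$ and $b=u_k$, which gives
\[
 \norm{Tv-u_{k+1}}^2\le\norm{v-u_k}^2+2(1-\lambda)\ip{v-Tv}{u_k-u_{k+1}}.
\]

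Next I would expand both squared norms via $\norm{p-q}^2=\norm p^2-2\ip pq+\norm q^2$, then rearrange, using the identity $2\ip{Tv}{u_{k+1}}-2\ip v{u_k}=2\ip{Tv}{u_{k+1}-u_k}+2\ip{Tv-v}{u_k}$, to bring the inequality into the form
\[
 \norm{Tv}^2-\norm v^2\le2\ip{Tv}{u_{k+1}-u_k}+2\ip{Tv-v}{u_k}+\bigl(\norm{u_k}^2-\norm{u_{k+1}}^2\bigr)+2(1-\lambda)\ip{v-Tv}{u_k-u_{k+1}}.
\]
Summing over $k=1,\dots,n$ and dividing by $n$, the first, third and fourth terms on the right telescope to $\tfrac2n\ip{Tv}{u_{n+1}-u_1}$, $\tfrac1n(\norm{u_1}^2-\norm{u_{n+1}}^2)$ and $\tfrac{2(1-\lambda)}n\ip{v-Tv}{u_1-u_{n+1}}$, while the second term averages to $2\ip{Tv-v}{z_n}$ by the definition of $z_n$. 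Hence $\norm{Tv}^2-\norm v^2\le2\ip{Tv-v}{z_n}+\varepsilon_n$ for all $n$, where $\varepsilon_n$ is the sum of those three telescoped remainders.

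Because $\{u_{n+1}\}=\{T^nx\}$ is bounded while $v$, $Tv$, $u_1=x$ and $\lambda$ are fixed, each remainder is $O(1/n)$, so $\varepsilon_n\to0$; passing to the limit along $\{n_i\}$ and using $z_{n_i}\rightharpoonup w$ (so $\ip{Tv-v}{z_{n_i}}\to\ip{Tv-v}{w}$, since $\ip{Tv-v}{\cdot}$ is weakly continuous) yields $\norm{Tv}^2-\norm v^2\le2\ip{Tv-v}{w}$. Adding $\norm w^2$ to both sides and regrouping shows this is precisely $\norm{Tv-w}^2\le\norm{v-w}^2$, and since $v\in C$ was arbitrary we conclude $w\in\A(T)$. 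The only delicate part is the algebraic bookkeeping in the middle step — arranging the split so that, after summation, everything except the Cesàro term $2\ip{Tv-v}{z_n}$ collapses into $O(1/n)$ telescoped pieces — after which the boundedness hypothesis and weak continuity of a fixed linear functional finish the argument; there is no deeper analytic ingredient.
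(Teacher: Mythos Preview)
Your proof is correct and follows the same overall strategy as the paper's --- apply the $\lambda$-hybrid inequality with one argument running along the orbit $\{T^{k-1}x\}$, sum from $k=1$ to $n$, divide by $n$, and pass to the weak limit along the chosen subsequence. The organization differs, however. The paper works with the equivalent form~\eqref{eqn:hybrid-equiv2}, in which the terms $\norm{T^{k-1}x - Ty}^2 - \norm{T^{k}x - Ty}^2$ telescope directly and the surviving inner product, after averaging, involves both $z_n$ and $\frac{n+1}{n}z_{n+1}$; this forces a short extra argument that $z_{n_i+1}\rightharpoonup z$ as well. You start instead from the defining inequality, expand the squared norms by hand, and isolate the single Ces\`aro term $2\ip{Tv-v}{z_n}$, so that only $z_{n_i}$ appears in the limit. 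Your route trades a little more algebra for not needing the equivalent characterization~\eqref{eqn:hybrid-equiv2} or the $z_{n_i+1}$ step; both arrive at $\norm{Tv-w}^2\le\norm{v-w}^2$.
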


\begin{proof}
 Let $y \in C$ be given. 
 Since $T$ is $\lambda$-hybrid, it follows
 from~\eqref{eqn:hybrid-equiv2} that
 \begin{multline*}
  0 \leq \norm{T^{k-1}x - Ty}^2 - \norm{T^k x - Ty}^2 + \norm{Ty-y}^2 \\
  + 2 \ip{\lambda T^{k-1} x + (1-\lambda)T^k x - Ty}{Ty-y}
 \end{multline*}
 for all $k\in \N$. 
 Summing these inequalities from $k=1$ to $n$ and dividing by $n$, 
 we have
 \begin{align}
  \begin{split}\label{eqn:n}
   0 &\leq \frac1n \left(
   \norm{x - Ty}^2 - \norm{T^{n} x - Ty}^2 \right) + \norm{Ty-y}^2 \\
   &\qquad + 2 \ip{\lambda z_n
   + (1-\lambda) \Bigl( \frac{n+1}{n} z_{n+1} - \frac{x}n \Bigr)
   - Ty}{Ty-y}.
  \end{split}
 \end{align}
 Let $z$ be a weak cluster point of $\{ z_n \}$. 
 Then there exists a subsequence $\{z_{n_i}\}$ of $\{z_n\}$ such that 
 $z_{n_i} \rightharpoonup z$. Thus it follows from~\eqref{eqn:n} that
\begin{align}
 \begin{split} \label{eqn:n_i}
  0 &\leq \frac1{n_i} \norm{x - Ty}^2 + \norm{Ty-y}^2 \\
  &\qquad + 2 \ip{\lambda z_{n_i} + (1-\lambda)
  \Bigl( \frac{n_i +1}{n_i} z_{n_i+1} - \frac{x}{n_i} \Bigr) - Ty}{Ty-y}
 \end{split}
\end{align}
 for all $i\in\N$. 
 Since
 \[
 z_{n_i +1} = \frac{n_i}{n_i +1} z_{n_i} + \frac1{n_i +1} T^{n_i}x
 \]
 and $\{T^{n_i}x\}$ is bounded, 
 $\{z_{n_i +1}\}$ also converges weakly to $z$. 
 Taking the limit $i\to\infty$ in~\eqref{eqn:n_i}, we have
 \[
 0 \leq \norm{Ty-y}^2 + 2 \ip{\lambda z + (1-\lambda) z  - Ty}{Ty-y}
 = \norm{y-z}^2 - \norm{Ty-z}^2. 
 \]
 This shows that $\norm{Ty-z} \leq \norm{y-z}$ for all $y \in C$, 
 and hence every weak cluster point of $\{z_n\}$ is an attractive point
 of $T$.
\end{proof}

Using Lemma~\ref{lem:fpt}, we obtain the following mean convergence
theorem for a $\lambda$-hybrid mapping as a corollary of
Theorem~\ref{thm:mean}. 

\begin{corollary}
 Let $H$ be a Hilbert space, $C$ a nonempty subset of $H$, 
 $\lambda \in \R$, 
 $T\colon C\to C$ a $\lambda$-hybrid mapping with an attractive point, 
 $x\in C$, and $\{z_n\}$ a sequence in $C$ defined by~\eqref{e:z_n-def2}
 for $n\in\N$, where $T^0$ is the identity mapping on $C$. 
 Then $\{z_n\}$ converges weakly to the strong limit of 
 $\{P_{\A(T)}T^nx\}$. 
\end{corollary}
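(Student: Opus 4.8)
The plan is to deduce the corollary directly from Theorem~\ref{thm:mean} and Lemma~\ref{lem:fpt}. The only hypothesis of Theorem~\ref{thm:mean} that is not immediately available is that every weak cluster point of $\{z_n\}$ belongs to $\A(T)$, and Lemma~\ref{lem:fpt} reduces this to the boundedness of the orbit $\{T^n x\}$. So the first (and essentially only) thing to check is that $\{T^n x\}$ is bounded.

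First I would fix an attractive point $u \in \A(T)$, which exists by hypothesis. Since $u$ is attractive, $\norm{Ty - u} \leq \norm{y - u}$ for every $y \in C$; applying this with $y = T^{k-1} x$ and iterating over $k$ yields $\norm{T^n x - u} \leq \norm{x - u}$ for all $n \in \N$. Hence $\{T^n x\}$ lies in the closed ball of radius $\norm{x-u}$ centered at $u$, so it is bounded.

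Next I would invoke Lemma~\ref{lem:fpt} with the same $\lambda$, $T$, $x$, and $\{z_n\}$: since $\{T^n x\}$ is bounded, every weak cluster point of $\{z_n\}$ is an attractive point of $T$, that is, belongs to $\A(T)$. Finally, I would apply Theorem~\ref{thm:mean} to $T$, $x$, and $\{z_n\}$: the theorem gives that $\{P_{\A(T)} T^n x\}$ converges strongly, and since the weak-cluster-point condition has just been verified, $\{z_n\}$ converges weakly to that strong limit, which is the assertion.

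As for obstacles, there is essentially none beyond assembling the pieces: the substantive work is already contained in Lemma~\ref{lem:fpt} and Theorem~\ref{thm:mean}, and the mere existence of an attractive point forces the orbit to be bounded. The only point requiring a word of care is that $\A(T) \neq \emptyset$, so that $P_{\A(T)}$ is well defined, but this is exactly what the phrase ``with an attractive point'' provides.
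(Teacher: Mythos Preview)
Your proposal is correct and matches the paper's own proof essentially step for step: fix an attractive point to show $\{T^n x\}$ is bounded, invoke Lemma~\ref{lem:fpt} to get that all weak cluster points of $\{z_n\}$ lie in $\A(T)$, and then apply Theorem~\ref{thm:mean}. The only cosmetic difference is that the paper bounds $\norm{T^n x}$ via the triangle inequality while you observe directly that $\{T^n x\}$ sits in a ball centered at the attractive point.
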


\begin{proof}
 Let $z \in \A(T)$. Then it turns out that 
 \[
 \norm{T^n x} \leq \norm{T^n x - z} + \norm{z}
 \leq \norm{x-z} + \norm{z}
 \]
 for all $n \in \N$. Thus $\{T^n x\}$ is bounded. 
 Since $T$ is $\lambda$-hybrid, it follows from Lemma~\ref{lem:fpt} that
 every weak cluster point of $\{z_n \}$ belongs to $\A(T)$. 
 Therefore Theorem~\ref{thm:mean} implies the conclusion.
\end{proof}

\section*{Acknowledgment}
This work was supported by the Research Institute for Mathematical
Sciences, a Joint Usage/Research Center located in Kyoto University.

\begin{bibdiv}
\begin{biblist}

\bib{MR2983907}{article}{
      author={Akashi, Shigeo},
      author={Takahashi, Wataru},
       title={Strong convergence theorem for nonexpansive mappings on
  star-shaped sets in {H}ilbert spaces},
        date={2012},
        ISSN={0096-3003},
     journal={Appl. Math. Comput.},
      volume={219},
      number={4},
       pages={2035\ndash 2040},
         url={https://doi.org/10.1016/j.amc.2012.08.046},
      review={\MR{2983907}},
}

\bib{MR2682871}{article}{
      author={Aoyama, Koji},
      author={Iemoto, Shigeru},
      author={Kohsaka, Fumiaki},
      author={Takahashi, Wataru},
       title={Fixed point and ergodic theorems for {$\lambda$}-hybrid mappings
  in {H}ilbert spaces},
        date={2010},
        ISSN={1345-4773},
     journal={J. Nonlinear Convex Anal.},
      volume={11},
      number={2},
       pages={335\ndash 343},
      review={\MR{2682871 (2011i:47069)}},
}

\bib{MR2981792}{article}{
      author={Aoyama, Koji},
      author={Kohsaka, Fumiaki},
       title={Fixed point and mean convergence theorems for a family of
  {$\lambda$}-hybrid mappings},
        date={2011},
        ISSN={1906-9685},
     journal={J. Nonlinear Anal. Optim.},
      volume={2},
      number={1},
       pages={87\ndash 95},
      review={\MR{2981792}},
}

\bib{MR3017202}{article}{
      author={Aoyama, Koji},
      author={Kohsaka, Fumiaki},
       title={Uniform mean convergence theorems for hybrid mappings in
  {H}ilbert spaces},
        date={2012},
        ISSN={1687-1812},
     journal={Fixed Point Theory Appl.},
       pages={2012:193, 13},
         url={http://dx.doi.org/10.1186/1687-1812-2012-193},
      review={\MR{3017202}},
}

\bib{sqnIII}{article}{
      author={Aoyama, Koji},
      author={Kohsaka, Fumiaki},
       title={Strongly quasinonexpansive mappings, {III}},
        date={2020},
     journal={Linear Nonlinear Anal.},
      volume={6},
      number={1},
       pages={1\ndash 12},
}

\bib{MR0298499}{article}{
      author={Dotson, W.~G., Jr.},
       title={Fixed points of quasi-nonexpansive mappings},
        date={1972},
        ISSN={0263-6115},
     journal={J. Austral. Math. Soc.},
      volume={13},
       pages={167\ndash 170},
      review={\MR{0298499 (45 \#7551)}},
}

\bib{MR3073499}{article}{
      author={Guu, Sy-Ming},
      author={Takahashi, Wataru},
       title={Existence and approximation of attractive points of the widely
  more generalized hybrid mappings in {H}ilbert spaces},
        date={2013},
        ISSN={1085-3375},
     journal={Abstr. Appl. Anal.},
       pages={Art. ID 904164, 10},
         url={https://doi.org/10.1155/2013/904164},
      review={\MR{3073499}},
}

\bib{MR2970683}{article}{
      author={Lin, Lai-Jiu},
      author={Takahashi, Wataru},
       title={Attractive point theorems and ergodic theorems for nonlinear
  mappings in {H}ilbert spaces},
        date={2012},
        ISSN={1027-5487},
     journal={Taiwanese J. Math.},
      volume={16},
      number={5},
       pages={1763\ndash 1779},
         url={https://doi.org/10.11650/twjm/1500406795},
      review={\MR{2970683}},
}

\bib{MR2006529}{article}{
      author={Takahashi, W.},
      author={Toyoda, M.},
       title={Weak convergence theorems for nonexpansive mappings and monotone
  mappings},
        date={2003},
        ISSN={0022-3239},
     journal={J. Optim. Theory Appl.},
      volume={118},
      number={2},
       pages={417\ndash 428},
         url={https://doi.org/10.1023/A:1025407607560},
      review={\MR{2006529}},
}

\bib{MR2548424}{book}{
      author={Takahashi, Wataru},
       title={Introduction to nonlinear and convex analysis},
   publisher={Yokohama Publishers, Yokohama},
        date={2009},
        ISBN={978-4-946552-35-9},
      review={\MR{2548424 (2010j:49001)}},
}

\bib{MR2858695}{article}{
      author={Takahashi, Wataru},
      author={Takeuchi, Yukio},
       title={Nonlinear ergodic theorem without convexity for generalized
  hybrid mappings in a {H}ilbert space},
        date={2011},
        ISSN={1345-4773},
     journal={J. Nonlinear Convex Anal.},
      volume={12},
      number={2},
       pages={399\ndash 406},
      review={\MR{2858695}},
}

\bib{MR3015118}{article}{
      author={Takahashi, Wataru},
      author={Wong, Ngai-Ching},
      author={Yao, Jen-Chih},
       title={Attractive point and weak convergence theorems for new
  generalized hybrid mappings in {H}ilbert spaces},
        date={2012},
        ISSN={1345-4773},
     journal={J. Nonlinear Convex Anal.},
      volume={13},
      number={4},
       pages={745\ndash 757},
      review={\MR{3015118}},
}

\bib{MR3397117}{article}{
      author={Takahashi, Wataru},
      author={Wong, Ngai-Ching},
      author={Yao, Jen-Chih},
       title={Attractive points and {H}alpern-type strong convergence theorems
  in {H}ilbert spaces},
        date={2015},
        ISSN={1661-7738},
     journal={J. Fixed Point Theory Appl.},
      volume={17},
      number={2},
       pages={301\ndash 311},
         url={https://doi.org/10.1007/s11784-013-0142-3},
      review={\MR{3397117}},
}

\end{biblist}
\end{bibdiv}

\end{document}